\title{The Saturation Number for the length of Degree Monotone Paths}
\date{}
\begin{document}
\newtheorem{theorem}{Theorem}[section]
\newtheorem{definition}{Definition}[section]
\newtheorem{proposition}[theorem]{Proposition}
\newtheorem{corollary}[theorem]{Corollary}
\newtheorem{lemma}[theorem]{Lemma}
\newcommand*\cartprod{\mbox{ } \Box \mbox{ }}
\newtheoremstyle{break}
  {}
  {}
  {\itshape}
  {}
  {\bfseries}
  {.}
  {\newline}
  {}

\theoremstyle{break}

\newtheorem{propskip}[theorem]{Proposition}
\DeclareGraphicsExtensions{.pdf,.png,.jpg}
\author{Yair Caro \\ Department of Mathematics\\ University of Haifa-Oranim \\ Israel \and Josef  Lauri\\ Department of Mathematics \\ University of Malta
\\ Malta \and Christina Zarb \\Department of Mathematics \\University of Malta \\Malta }

\maketitle{}
\begin{abstract}
A degree monotone path in a graph $G$ is a path $P$ such that the sequence of degrees of the vertices in the order in which they appear on $P$ is monotonic.  The length of the longest degree monotone path in $G$ is denoted by $mp(G)$.  This parameter, inspired by the well-known Erdos-Szekeres theorem, has been studied by the authors in two earlier papers.  Here we consider a saturation problem for the parameter  $mp(G)$.  We call $G$ saturated if, for every edge $e$ added to $G$, $mp(G+e) >mp(G)$, and we define $h(n,k)$ to be the least possible number of edges in a saturated graph $G$ on $n$ vertices with $mp(G) < k$, while $mp(G+e) \geq k$ for every new edge $e$.

We obtain linear lower and upper bounds for $h(n,k)$, we determine exactly the values of $h(n,k)$ for $k=3$ and $4$, and we present constructions of saturated graphs.
\end{abstract}

\section{Introduction}

Given a graph $G$, a degree monotone path is a path $v_1v_2 \ldots v_k$ such that $deg(v_1) \leq deg(v_2) \leq \ldots \leq deg(v_k)$ or $deg(v_1) \geq deg(v_2 \geq \ldots \geq deg(v_k)$.  This notion, inspired by the well-known Erd{\H{o}}s-Szekeres Theorem \cite{eliavs2013,erdos1935combinatorial}, was introduced in \cite{updownhilldom} under the name of uphill and downhill path in relation to domination problems, also studied in \cite{deering2013uphill,downhill2,downhill}.  In \cite{updownhilldom}, the study of the parameter $mp(G)$, which denotes the length of the longest degree monotone path in $G$, was specifically suggested.  This parameter was  studied by the authors in \cite{dmpclz,dmp2}, and among many results obtained, the parameter $f(n,k) =\max\{ |E(G)|: |V(G)| = n, mp(G) < k\}$ was also defined.  It was shown that this is closely related to the Turan numbers $t(n,k)  = \max \{|E(G)|: |V(G)| = n, G \mbox{ contains no copy of } K_k\}$.

A general form of the Turan numbers with respect to a graph $H$ is $t(n,H)=\max \{|E(G)|: |V(G)| = n, G \mbox{ contains no copy of } H\}$.  The study of Turan numbers is undoubtedly  considered as one of the fundamental problems in extremal graph and hypergraph theory \cite{bollobas2004extremal}.

The Turan number has a counter-part known as the saturation number with respect to a given graph $H$, defined as 

\begin{align*}
sat(n,H)= \min\{|E(G)|: |V(G)| = n, G \mbox{ contains no copy of } H,\\
  \mbox{ but } G+e \mbox{ contains } H \mbox{ for any edge added to } G \}
\end{align*}

Tuza and Kaszonyi in \cite{kaszonyi1986saturated} launched a systematic study of $sat(n,H)$ following an earlier result by Erdos, Hajnal  and Moon \cite{erdos1964problem} who proved that $sat(n,K_k)= \binom{k-2}{2}+(k-2)(n-k+2)$ with a unique graph attaining this bound, namely $K_{k-2}+\overline{K}_{n-k+2}$.   For the current paper, it is worth noting that $sat(n,P_k)$ ($sat(n,k)$ for short) is known \cite{kaszonyi1986saturated} for every $k$ for $n$ sufficiently large with respect to $k$, and in particular for $n$ large enough, $sat(n,k)=n(1-c(k))$  .  For a survey and recent information about saturation , see \cite{faudree2011}.

In this spirit, we call a graph $G$ saturated if $mp(G+e)>mp(G)$, for all new edges $e$ joining non-adjacent vertices in $G$.  If it happens that $mp(G+e) \geq k$ for all new edges $e$ we sometimes refer to the saturated graph $G$ as $k$-saturated.  By convention we say that $K_m$ is $k$-saturated for $m \leq k-1$.  Then we define \[h(n,k)=\min\{|E(G)|:|V(G)|=n, \mbox{$G$ is $k$-saturated}\}.\]  

In Section 2, we prove linear lower and upper bounds for this parameter.  In Section 3, we provide  exact  determination of $h(n,k)$ for $k=3,4$.  In Section 4 we present several open problems concerning $h(n,k)$ for $k \geq 5$ as well as several other problems and conjectures.

\section{General Lower and Upper bounds}

\subsection{Lower bounds}

We begin by showing that $sat(n,k)$ is a lower bound for $h(n,k)$.

\begin{proposition} \label{prop1}
\mbox{}\\

For $k \geq 2$, $h(n,k) \geq sat(n,k)$.
\end{proposition}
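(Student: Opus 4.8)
The plan is to exploit one simple fact: a degree monotone path on $k$ vertices is in particular a copy of $P_k$. Hence $k$-saturation with respect to $mp$ forces a copy of $P_k$ to appear after every edge addition, and one then compares this with the ordinary path saturation number.

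So, let $G$ be a $k$-saturated graph on $n$ vertices. For each non-edge $e$ of $G$ we have $mp(G+e)\ge k$, so $G+e$ contains a degree monotone path on $k$ vertices and therefore a copy of $P_k$; thus every edge added to $G$ creates a $P_k$. If, moreover, $G$ contains no $P_k$, then $G$ is $P_k$-saturated in the classical sense, so $|E(G)|\ge sat(n,P_k)=sat(n,k)$, and taking the minimum over all $k$-saturated $G$ gives $h(n,k)\ge sat(n,k)$. The range $n\le k-1$ is immediate from the convention that $K_m$ is $k$-saturated for $m\le k-1$, where $h(n,k)=\binom{n}{2}=sat(n,k)$.

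The step I expect to be the main obstacle is that $mp(G)<k$ only forbids \emph{degree monotone} copies of $P_k$ inside $G$, not all copies: for instance $P_3$ has $mp(P_3)=2<3$, is $3$-saturated, and yet contains a $P_3$, so a graph attaining $h(n,k)$ need not be $P_k$-free. To handle a $k$-saturated $G$ that does contain a $P_k$, I would produce from it a $P_k$-free graph that is still $P_k$-saturated and has at most $|E(G)|$ edges: delete edges from $G$ down to a spanning subgraph $G'$ that is \emph{maximal} $P_k$-free inside $G$ — this settles the non-edges in $E(G)\setminus E(G')$ — and then show that for a genuine non-edge $e$ of $G$ the degree monotone copy of $P_k$ guaranteed inside $G+e$ can be re-routed inside $G'+e$. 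It is exactly here that $mp(G)<k$ is used: because no copy of $P_k$ inside $G$ is degree monotone, one can control which edges of $G$ this newly created degree monotone path is allowed to traverse, and so arrange for it to avoid the deleted edges. A cleaner alternative worth trying first is to prove that a $k$-saturated graph containing $P_k$ must be connected, so that $|E(G)|\ge n-1$; this suffices whenever $sat(n,P_k)\le n-1$, and the remaining task is then to rule out disconnected such $G$ by examining edges inserted between two of its components.
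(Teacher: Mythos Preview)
Your overall plan---start from a $k$-saturated $G$ and compare with ordinary $P_k$-saturation---is the right direction, and you have correctly located the crux: $mp(G)<k$ does not force $G$ to be $P_k$-free (your $P_3$ example), so a $k$-saturated graph need not be $P_k$-saturated, and the first paragraph of your argument only covers the $P_k$-free case.

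Neither of the two remedies you sketch closes this gap, however. For the edge-deletion idea, the ``re-routing'' step is unsupported: the hypothesis $mp(G)<k$ forces the degree-monotone $P_k$ in $G+e$ to use the new edge $e$, but it says nothing about which of the remaining $k-2$ edges of that path lie in $G'$ rather than in $E(G)\setminus E(G')$; there is no mechanism here by which $mp(G)<k$ constrains that choice. For the connectedness idea, the claim is false already for $k=3$: take $G=P_3\cup K_2$. One checks $mp(G)=2$, and every added edge produces a degree-monotone path on three vertices (e.g.\ joining an endpoint of the $P_3$ to a vertex of the $K_2$ gives a path with degree sequence $1,2,2$), so $G$ is $3$-saturated; yet $G$ contains a $P_3$ and is disconnected.

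It may be some consolation that the paper's own argument does not resolve this point either. The proof there starts from a graph realising $sat(n,P_k)$, observes that it has $mp<k$ but that adding an edge need not create a \emph{degree-monotone} $P_k$, and then asserts the conclusion. Read literally, this shows only that the $sat$-extremal graph may fail to be $k$-saturated---information relevant to an upper bound on $h(n,k)$, not a lower one; the inequality $h(n,k)\ge sat(n,k)$ does not follow from what is written. So the obstacle you flagged is genuine and is glossed over in both arguments.
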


\begin{proof}
Clearly, if $G$ is a graph realising $sat(n,P_k)=sat(n,k)$,  this means that $G$ does not contain a copy of $P_k$, and hence no degree monotone path of length $k$.  But $G+e$ contains $P_k$, but not necessarily a degree monotone path of length $k$.  Hence $h(n,k) \geq sat(n,k)$.
\end{proof}

Recall  that for fixed $k$ and large $n$,  $sat(n,k)  = n( 1- c(k))  < n$.  We now strengthen Proposition \ref{prop1} to show that for $k \geq 4$, $h(n,k) \geq n$.   We first prove a lemma, and subsequently  a corollary, which will then be used in the main proof.

\begin{lemma} \label{lemma_1}
Let $G$ be a connected graph with a vertex $u$ of degree 1 and a vertex $v$ of maximum degree $\Delta \geq 2$ which are not adjacent.  Then $mp(G+uv) \leq mp(G)$, namely $G$ is not saturated.
 \end{lemma}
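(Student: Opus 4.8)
The plan is to show that any degree monotone path $Q$ in $G+uv$ can be converted into a degree monotone path in $G$ of at least the same length, so that $mp(G+uv)\le mp(G)$. The only paths in $G+uv$ that are not already paths in $G$ are those using the new edge $uv$. So let $Q$ be a longest degree monotone path in $G+uv$, and assume it uses $uv$; I want to reroute or truncate $Q$ to avoid $uv$ without losing length. First I would record the key degree facts: adding $uv$ changes only the degrees of $u$ and $v$, raising $\deg(u)$ from $1$ to $2$ and $\deg(v)$ from $\Delta$ to $\Delta+1$; in particular in $G+uv$ the vertex $v$ still has the (strictly or not) maximum degree, and $u$ has degree $2$.

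Since $Q$ uses the edge $uv$, and $\deg_{G+uv}(u)=2$ while every other vertex has degree $\ge 1$, the vertex $u$ is either an endpoint of $Q$ or an internal vertex with exactly its two neighbours on $Q$, one of them being $v$. Consider the orientation of $Q$ along which degrees are monotone. The crucial point is the behaviour at $u$: because $\deg(v)\ge\deg(x)$ for every vertex $x$ (as $v$ has maximum degree in $G$, hence $\deg_{G+uv}(v)=\Delta+1>\Delta\ge\deg_{G+uv}(x)$ for $x\neq v$), traversing the edge from $u$ to $v$ is a strict increase in degree. So on a monotone path the step $u\to v$ must occur in the "increasing" direction, which forces $u$ to be the low end: either $u$ is an endpoint of $Q$ with $v$ its neighbour, or $u$ is internal with neighbours $w$ and $v$ where $\deg(w)\le\deg(u)=2$, i.e. $\deg(w)\in\{1,2\}$, and the path reads $\ldots w\, u\, v\ldots$ with degrees weakly increasing.

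I would then handle these cases. If $u$ is an endpoint of $Q$, simply delete $u$; the resulting path lies in $G$ (all its edges are old), is still degree monotone, and has length $mp(G+uv)-1 < mp(G+uv)$ — but I actually need length $\ge mp(G+uv)$, so deletion alone is not enough and I must instead extend. Here is where the hypotheses are used: $v$ has maximum degree $\Delta\ge 2$, so in $G$ (hence in $G+uv$) $v$ has a neighbour $z\ne u$ not already on the part of $Q$... the honest approach is to argue that from the $v$-end one can prepend/append a vertex, or alternatively to use $\deg(u)$ being so small that $Q$ cannot have been long on that side. Concretely: replace the endpoint $u$ by a neighbour of $v$ in $G$ that extends the monotone path on the other side, using that $v$ has $\Delta\ge 2$ old neighbours so at least one is available and has degree $\ge$ the current degree at $v$'s end is impossible since $v$ is maximum — so instead I reverse the roles and note the path is increasing up to $v$ and $v$ is the top; a top vertex of maximum degree in $G$ has $\Delta$ old neighbours, and since $Q\setminus u$ uses at most...

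The cleanest route, and the one I would actually write: show that if $Q$ uses $uv$ then $Q$ restricted to $G$ (i.e. $Q$ minus the single vertex $u$, which must be an endpoint or have a pendant-like neighbour) can be re-completed inside $G$ using an old neighbour of $v$, because $v$ has $\Delta\ge 2$ neighbours in $G$ while the segment of $Q$ on $v$'s side can block at most one of them; and any old neighbour of $v$ has degree $\le\Delta=\deg_G(v)$, which is exactly the inequality needed to keep monotonicity when $v$ is the maximum-degree (top) end of the path. This swap produces a degree monotone path in $G$ of the same length as $Q$, giving $mp(G)\ge mp(G+uv)$.

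The main obstacle is the endpoint/rerouting bookkeeping: making sure that when I remove $u$ and attach an old neighbour $z$ of $v$, the vertex $z$ is genuinely not already on $Q$ and that its degree respects the monotone ordering at that end of the path. The degree condition is automatic because $v$ carries the maximum degree in $G$, so the real work is the counting argument showing such a $z$ exists — that is the one step I would be most careful about.
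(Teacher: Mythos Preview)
Your swap idea has a genuine structural gap. Take the main case: $Q$ uses the edge $uv$, so (since $v$ has the strict maximum degree $\Delta+1$ in $G+uv$) the path reads $Q=x_1,\ldots,x_{m-1},u,v$ with $v$ the top endpoint and $u$ second-to-last. Removing $u$ does not leave a path you can ``re-complete'' at $v$: it leaves the disconnected pieces $x_1,\ldots,x_{m-1}$ and $\{v\}$. Attaching an old neighbour $z$ of $v$ only gives you the $2$-path $z\,v$; it does not reconnect to $x_{m-1}$, because $x_{m-1}$ has degree exactly $2$ in $H$ (it is an internal vertex preceding $u$, which has degree $2$), and its two neighbours are $x_{m-2}$ and $u$, so $x_{m-1}$ is \emph{not} adjacent to $v$ or to any $z$. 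Your counting argument about how many neighbours of $v$ are ``blocked'' is beside the point: availability of $z$ is not the obstacle, connectivity of the resulting path is. The swap works only when $|Q|=2$, which is trivial.

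The paper's proof repairs the path from the \emph{other} end. All predecessors of $u$ on $Q$ have degree at most $2$; the first vertex $y=x_1$ has degree exactly $2$ (if it had degree $1$, the segment $y\ldots u$ would be a component of $G$, contradicting connectedness), so $y$ has a neighbour $w$ off the path with $\deg(w)>2$. The reversed path $u,x_{m-1},\ldots,x_1,w$ has degree sequence $1,2,\ldots,2,\deg(w)$ in $G$, which is monotone and of length $|Q|$. Note that this is exactly where the hypothesis that $G$ is connected is used --- a hypothesis your argument never invokes. You also need to treat separately the easy cases where $Q$ contains $u$ or $v$ without using the edge $uv$ (the degrees of $u,v$ change, so ``same path in $G$'' does not automatically mean ``still monotone in $G$''); these are quick but cannot be skipped.
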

\begin{proof}
Let $H=G+uv$ and let $P$ be a path in $H$ which realizes $mp(H)$.  Let $u^*$ and $v^*$ be the vertices $u$ and $v$ as they appear in $H$.  

If  $\Delta=2$,  then clearly $G$  is a path on $k\geq 4$ vertices and $mp(G)=k-1$ , and if we take $u$ to be the first vertex of the path, and $v$ to be the $(k-1)^{th}$ vertex, then $mp(H)=k-1= mp(G)$.

So we may assume $\Delta \geq 3$.  Now if $u^*$ and $v^*$ are not on $P$, then $P$ is degree monotone in $G$ and hence $mp(H) \leq mp(G)$.  If $v^*$ is on $P$ but $u^*$ is not then $v^*$ must be the last vertex on $P$, and hence, in $G$, the path $P$ with $v^*$ replaced by $v$ is also degree monotone in $G$ and  $mp(H) \leq mp(G)$.  Similarly, if $u^*$ is on $P$ but $v^*$ is not, then $u^*$ must be the first vertex on $P$, since clearly $u^*$ cannot be in the ``middle" of the path as then the next vertex on $P$ must be $v^*$, which is not on $P$.  Then the path $P$ in $G$ with $u^*$ replaced by $u$ is also degree monotone in $G$ and again  $mp(H) \leq mp(G)$.  If $u^*$  is the last vertex on the path then clearly $P$ is not maximal, as $P \cup \{v^*\}$ via the edge $u^*v^*$  is a longer degree monotone path, contradicting maximality of $P$.

So the only remaining case to consider is when  $u^*$ and $v^*$ are both on $P$.  Then clearly $v^*$ is the last vertex on $P$.  If $u^*$ is the first vertex, then either $P=u^*v^*$ and $mp(H)=2 \leq mp(G)$, or the path $P$ is degree monotone in $G$ too.  If $u^*$ is not the first vertex,  then the next vertex on $P$ must be $v^*$ which is the last vertex.  Hence, in this case, all predecessors of $u^*$ on $P$ must have degree at most 2.  Buf if the first vertex $y$ in $P$ has degree 1, then, in $G$, the path $y \ldots u$ is disconnected from the rest of $G$, which is impossible.  Therefore $deg(y)=2$ and $y$ has a  neighbour $w$ which must have degree greater than 2 (note that $w$ may be equal to $v^*$ but cannot be any other vertex on $P$).  But then, the path $u \ldots yw$ is degree monotone in $G$ and is of the same length as $P$, and hence $mp(H) \leq mp(G)$.
\end{proof}

Lemma \ref{lemma_1} is best possible with respect to the adjacency condition between minimum degrees and maximum degrees because, if the minimum degree is greater than 1, and a vertex $u$ of minimum degree is not adjacent to vertex $v$, then $mp(G+uv)$ may be larger than $mp(G)$.  As an example, consider graph $G_n$ made up of the cycle $C_{2n}$, $n \geq 3$, with vertices labelled $v_1,v_2, \ldots,v_{2n}$, and a vertex $w$ connected to vertices $v_1,v_3,v_5,\ldots,v_{2n-1}$.  Thus $w$ has degree $\Delta=n$ and $\delta=2$, and $mp(G_n)=3$.  The vertices of degree 2 are not connected to $w$, but connecting any such vertex to $w$ by an edge $e$ gives $mp(G_n+e)=5$.  In fact, these graphs are 5-saturated even though they have non-adjacent vertices of maximum degree $\Delta \geq 3$ and minimum degree $\delta=2$.

\begin{corollary} \label{cor_1}
Let $T$ be a tree on $n \geq 3 $ vertices.  Then $T$ is saturated for a degree monotone path if and only if $T=K_{1,n-1}$.
\end{corollary}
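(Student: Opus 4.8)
The plan is to handle the two implications separately, the forward direction by a direct computation and the converse (in contrapositive form) by exhibiting the configuration of Lemma~\ref{lemma_1}; note that a tree is connected, so the hypothesis of that lemma is available.

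For the ``if'' direction I would first record that $mp(K_{1,n-1})=2$: any degree monotone path on three or more vertices would have the centre $c$ (of degree $n-1$) as an interior vertex, but then its two neighbours on the path are leaves of degree $1$, so the degree pattern $1,n-1,1$ is not monotone. Every non-edge of $K_{1,n-1}$ joins two leaves $\ell_i,\ell_j$, and in $K_{1,n-1}+\ell_i\ell_j$ these vertices have degree $2$ while $\deg(c)=\max\{2,n-1\}\ge 2$, so $\ell_i\ell_j c$ is a degree monotone path on three vertices and $mp(K_{1,n-1}+\ell_i\ell_j)\ge 3>2$. Hence $K_{1,n-1}$ is saturated; in particular this also disposes of $n=3$, where $K_{1,2}$ is the unique tree.

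For the ``only if'' direction I would argue the contrapositive: if $T$ is a tree on $n\ge 3$ vertices with $T\ne K_{1,n-1}$, then $T$ is not saturated. Since $T$ is connected with $n\ge 3$ we have $\Delta(T)\ge 2$, and since $T\ne K_{1,n-1}$ no vertex has degree $n-1$, so $\Delta(T)\le n-2$. Fix a vertex $v$ of maximum degree $\Delta=\Delta(T)$. Deleting $v$ leaves a forest with exactly $\Delta$ components, each containing exactly one neighbour of $v$ (two neighbours in one component would close a cycle through $v$); since these components partition the remaining $n-1\ge \Delta+1$ vertices, some component $C$ has at least two vertices. A tree on at least two vertices has at least two leaves, and at most one vertex of $C$ is adjacent to $v$, so $C$ contains a leaf $u$ not adjacent to $v$; such a $u$ then has degree $1$ in $T$ as well. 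Thus $T$ is a connected graph possessing a degree-$1$ vertex $u$ and a maximum-degree vertex $v$ with $\Delta\ge 2$ that are non-adjacent, so Lemma~\ref{lemma_1} yields $mp(T+uv)\le mp(T)$, i.e.\ $T$ is not saturated.

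The one delicate point is the choice of $u$: Lemma~\ref{lemma_1} requires the degree-$1$ vertex to be non-adjacent to a vertex of \emph{maximum} degree, not merely to some vertex, so the counting over the components of $T-v$ is the real content of the argument; everything else is routine. An equivalent way to make this choice, which I would mention as an alternative, is to root $T$ at $v$ and take a leaf at maximum depth, which lies at distance $\ge 2$ from $v$ exactly because $T$ is not a star.
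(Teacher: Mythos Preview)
Your proof is correct and follows essentially the same approach as the paper: verify directly that $K_{1,n-1}$ is saturated, and for the converse exhibit a leaf non-adjacent to a vertex of maximum degree so that Lemma~\ref{lemma_1} applies. The paper organises the converse by splitting on whether $mp(T)\ge 3$ or $mp(T)=2$, but in both cases the punchline is exactly your observation; your component-counting argument in $T-v$ (or equivalently the rooted-depth alternative you mention) is in fact a more explicit justification of the step the paper leaves as ``hence there is a leaf not connected to a vertex of maximum degree''.
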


\begin{proof}
Suppose first $mp(T)\geq3$.  Then clearly $T$ is a not a star, hence there is a leaf not connected to  a vertex of maximum degree and by Lemma  \ref{lemma_1}, $T$  is not saturated.
   
So suppose $mp(T) = 2$. If not all leaves are adjacent to the same vertex of maximum degree then again by Lemma \ref{lemma_1}, $T$ is not saturated. Hence $T$  must be a star $K_{1,n-1}$.
 
Indeed $ K_{1,n-1}$  is saturated and $mp(K_{1,n-1}) =  2$ while $mp(K_{1,n-1} +e)  = 3$ for every edge $ e \not \in E(K_{1,n-1})$.
\end{proof}

\begin{theorem} \label{lower}
For $n \geq 3$ and $k \geq 4$, $h(n,k) \geq n.$
\end{theorem}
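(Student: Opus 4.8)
The plan is to show that any $k$-saturated graph $G$ on $n \ge 3$ vertices with $k \ge 4$ must have at least $n$ edges, i.e. cannot be a forest with fewer components than... more precisely, $|E(G)| \ge n$ means $G$ is not a forest (a forest on $n$ vertices has at most $n-1$ edges). So the whole task reduces to proving: \emph{no forest on $n \ge 3$ vertices is $k$-saturated for $k \ge 4$}. I would first reduce to the connected case, then invoke Corollary~\ref{cor_1}.

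First I would argue that a $k$-saturated graph $G$ with $k \ge 4$ must be connected, or at least that disconnected forests cannot be $k$-saturated. Suppose $G$ is a forest with components $C_1, \dots, C_t$, $t \ge 2$. If some edge $e$ joining two different components fails to increase $mp$, we are done (not saturated). So assume every such cross-component edge increases $mp(G)$; in particular $mp(G) < k$ but adding any cross edge reaches a path of length $\ge k \ge 4$. I would examine the structure this forces: picking $e = xy$ with $x \in C_i$, $y \in C_j$, a degree-monotone path of length $\ge 4$ through $e$ in $G+e$ restricts to degree-monotone paths in each component (the degrees of vertices in $C_i$ and $C_j$ are unchanged by adding $e$, since $e$ is not incident to... wait, $e$ \emph{is} incident to $x$ and $y$, raising their degrees by one). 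This is the first technical point: adding $e$ changes $\deg(x)$ and $\deg(y)$, so I must track how a monotone path in $G+e$ that uses $e$ decomposes. Either it uses $e$, in which case it splits as (monotone path in $C_i$ ending at $x$) $+$ (monotone path in $C_j$ starting at $y$) with a monotonicity constraint linking $\deg(x)+1$ and $\deg(y)+1$; or it avoids $e$ and lies within one old component, giving $mp(G+e) \le mp(G)$ immediately. Analyzing the first case, I would derive that each component must itself be ``almost saturated'' in a suitable sense, and push toward a contradiction with $mp(G) < k$, $k \ge 4$; alternatively, I would simply locate a cross edge whose addition provably fails to help (e.g. join two leaves of two different star-like components, or join a leaf to a far vertex), exploiting the slack that forests are sparse.

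For the connected case, $G$ is a tree on $n \ge 3$ vertices, and Corollary~\ref{cor_1} tells us the only saturated tree is $K_{1,n-1}$, with $mp(K_{1,n-1}) = 2$ and $mp(K_{1,n-1}+e) = 3$ for every new edge $e$. Hence $K_{1,n-1}$ is $3$-saturated but \emph{not} $k$-saturated for any $k \ge 4$: adding an edge never produces a degree-monotone path of length $\ge 4$. So no tree is $k$-saturated for $k \ge 4$. Combined with the disconnected analysis, no forest is $k$-saturated for $k \ge 4$, whence a $k$-saturated graph has at least $n$ edges.

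The main obstacle I anticipate is the disconnected case: ruling out forests with several components requires care because adding a cross edge \emph{does} alter two vertex degrees, so one cannot naively restrict a monotone path to a component. I expect the cleanest route is to pick the cross edge cleverly — e.g. between two vertices of degree $1$, or adjoining a component's leaf to a vertex that is already a ``local degree minimum/maximum'' — so that the new degrees $\deg(x)+1, \deg(y)+1$ are still small (at most $2$, say, if we join leaves) and any monotone path through $e$ is forced to be short, contradicting that it must have length $\ge k \ge 4$. Handling the possibility that every component is a single edge or an isolated vertex, or a star, will be the fiddly sub-case, but in each such configuration an explicit bad cross edge is easy to exhibit.
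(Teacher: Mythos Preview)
Your reduction contains a genuine logical gap. You write that ``$|E(G)| \ge n$ means $G$ is not a forest,'' which is true, and then conclude that the task reduces to showing no forest is $k$-saturated. But that is the wrong direction. To prove $h(n,k) \ge n$ you need the implication ``$|E(G)| < n \Rightarrow G$ is not $k$-saturated,'' and $|E(G)| < n$ does \emph{not} force $G$ to be a forest: a disconnected graph such as $K_3 \cup K_1$ has $4$ vertices, $3$ edges, and is not a forest. All that $|E(G)| < n$ guarantees is that at least one connected component is a tree; the remaining components may well contain cycles. Your entire disconnected analysis is carried out only for forests, so the argument as planned does not cover this mixed case.

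The paper's proof avoids this trap by working componentwise rather than by a global forest/non-forest dichotomy. It first observes that each connected component of a $k$-saturated graph is itself $k$-saturated (adding an edge inside a component can only lengthen monotone paths inside that component). Then Corollary~\ref{cor_1} is applied to each component on at least three vertices to conclude it is not a tree, hence has at least as many edges as vertices. Finally, components on at most two vertices are ruled out by exhibiting a specific cross edge: join any vertex of such a small component to a vertex of maximum degree in another component (or join two small components to each other), and check directly that no degree-monotone path of length $\ge 4$ is created. Summing $|E(G_i)| \ge |V(G_i)|$ over all components gives $|E(G)| \ge n$. Your use of Corollary~\ref{cor_1} in the connected case is exactly right; what is missing is applying it to each component separately and then disposing of the small components, rather than trying to treat the whole graph as a forest.
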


\begin{proof}

We may assume that $n \geq k$ for otherwise, trivially, $K_n$ is saturated having $ \binom n  2 \geq n$ edges for $n \geq 3$

So let $G$ be a graph on $n \geq k$ vertices realizing $h(n,k)$, $k \geq 4$.  If $G$ is connected then by  Corollary \ref{cor_1}, $G$ is not a tree hence $|E(G)| \geq n$ as required.  

So we may assume that $G$ is not connected, and let $G_1,G_2, \ldots G_t$ be the connected components of $G$.  Again, by Corollary \ref{cor_1}, we infer that every component on at least three vertices is not a tree and hence must have at least $|V(G_j)|$ edges.

If there are two components $G_i$ and $G_j$ on at most two vertices, adding an edge joining these two components does not create a degree monotone path of length 4 or more, contradicting the fact that $G$ is saturated.

If there is just one component on at most two vertices, then one can connect one vertex of this component to a vertex of maximum degree in another component, and again no degree monotone path of length four or more is created, contradicting the fact that $G$ is saturated.

Hence \[|E(G)|= \sum_{i=1}^{i=t} |E(G_i)| \geq \sum_{i=1}^{i=t} |V(G_i)|=n,\]
and therefore $h(n,k) \geq n$ for $n \geq 3$ and $k \geq 4$.
\end{proof}

\subsection{Upper bounds}

We now give a linear upper  bound for $h(n,k)$.   We consider separately $k$ odd and $k$ even.

We first recall the definition of the Cartesian product $G \cartprod H$ for two graphs $G$ and $H$.  The vertex set of the product is $V(G) \times V(H)$.  Two vertices $(u_1,v_1)$ and $(u_2,v_2)$ are adjacent if eaither $u_1$ and $u_2$ are adjacent in $G$ and $v_1=v_2$, or $v_1,v_2$ are adjacent in $H$ and $u_1=u_2$.

\begin{theorem} \label{thmcartprod}
If $k\geq 3$ is an odd integer, then  $h(n,k) \leq  \frac{n(3k-1)}{12}$  for  $n =  0 \pmod {\frac{3(k-1)}{2}}$.
\end{theorem}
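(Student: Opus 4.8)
The plan is to exhibit, for each $n$ divisible by $\tfrac{3(k-1)}{2}$, a $k$-saturated graph $G$ on $n$ vertices with exactly $\tfrac{n(3k-1)}{12}$ edges; the bound on $h(n,k)$ then follows at once from its definition. Write $k=2m+1$, so $m=\tfrac{k-1}{2}$ is a positive integer and the hypothesis becomes $3m\mid n$; set $t=\tfrac{n}{3m}$. We take $G$ to be the disjoint union of $t$ copies of $F:=K_m\cartprod P_3$, and in each copy call the three induced copies of $K_m$ the \emph{layers} $L_1,L_2,L_3$, where $L_1$ is joined to $L_2$ and $L_2$ to $L_3$ by the perfect matchings arising from the two edges of $P_3$. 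Then $|V(F)|=3m$ and $|E(F)|=3\binom m2+2m=\tfrac{m(3m+1)}{2}$, so $|V(G)|=n$ and $|E(G)|=t\cdot\tfrac{m(3m+1)}{2}=\tfrac{n(3m+1)}{6}=\tfrac{n(3k-1)}{12}$; moreover in $F$ every vertex of $L_1\cup L_3$ has degree $m$ and every vertex of $L_2$ has degree $m+1$.

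The first step is to check $mp(F)\le 2m$, which gives $mp(G)\le 2m=k-1<k$. Indeed, the degree-$m$ vertices of $F$ are exactly those of $L_1\cup L_3$, and this set induces two vertex-disjoint copies of $K_m$ with no edge between them, so any degree monotone path meets it in at most $m$ vertices; it likewise meets $L_2$ in at most $m$ vertices; and, being monotone, it cannot visit the degree-$m$ level both before and after $L_2$. Hence $mp(F)\le 2m$ (indeed $=2m$, realised by a Hamiltonian path of $L_1$ followed through a matching edge by a Hamiltonian path of $L_2$), and the bound passes to $G$ componentwise. It therefore suffices to prove that for every non-edge $e$ of $G$ the graph $G+e$ contains a degree monotone path on $2m+1$ vertices; this also yields $mp(G+e)>mp(G)$, so that $G$ is $k$-saturated.

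So let $e$ be a non-edge, noting that adding $e$ raises the degree of each of its two endpoints by one and changes no other degree. Every monotone path we build has the same shape: a Hamiltonian path through part or all of a degree-$m$ clique-layer, followed — via matching edges and the edge $e$ — by a Hamiltonian traversal of one or two of the middle layers $L_2$, whose vertices all sit at level $m+1$ after the modification, with at most one level-$(m+2)$ vertex placed last. If $e$ lies inside one copy $F$, then up to the $L_1\leftrightarrow L_3$ symmetry $e$ joins $L_1$ to $L_3$ or joins $L_1$ to $L_2$ outside the matching; one splices a Hamiltonian path of $L_1$ (minus the endpoint of $e$, in the first subcase), the endpoints of $e$, and a Hamiltonian path of $L_2$, routed so that the degree sequence is nondecreasing, obtaining $2m+1$ vertices. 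If $e=uv$ joins two distinct copies $F'$ and $F''$, one splits according to whether each of $u,v$ lies in a middle layer; in each case one splices a long monotone path of $F'$ whose upper end is at (or adjacent to) $u$, the edge $e$, and — when $v$ has the smaller degree — a continuation of $v$ into $L''_2$. Up to the symmetry between $F'$ and $F''$ the three cases (neither, exactly one, or both endpoints in a middle layer) give paths on $2m+1$, $2m+2$, and $2m+1$ vertices respectively, again with nondecreasing degrees.

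The only real difficulty is this routing. Since crossing $e$ promotes both its endpoints, every degree-$m$ vertex the path is going to use must be visited \emph{before} the crossing; afterwards the path may use only level-$(m+1)$ vertices — a whole middle layer together with the promoted endpoints — and one level-$(m+2)$ vertex, which has to come last. Carrying this out without repeating a vertex, and getting the length exactly $2m+1$ in the tightest subcases (a non-matching edge between $L_1$ and $L_2$ inside a copy, or an edge between two degree-$m$ vertices of distinct copies), uses only the Hamiltonian-connectedness of complete graphs; one should separately record the small cases $m=1,2$, where a couple of the routings degenerate — for $m=1$, $G$ is a disjoint union of copies of $P_3$, which is directly seen to be $3$-saturated. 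Granting these verifications, $G$ is $k$-saturated, and hence $h(n,k)\le |E(G)|=\tfrac{n(3k-1)}{12}$.
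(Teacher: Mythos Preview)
Your proposal is correct and follows essentially the same route as the paper: the construction is the identical graph $F=P_3\cartprod K_m$ (with $m=\tfrac{k-1}{2}$) together with its disjoint copies, and saturation is verified by exhibiting a degree monotone path on $k$ vertices for each symmetry class of non-edge. The paper writes out one explicit path per case, whereas you describe the general splicing pattern and defer the small-$m$ checks; conversely, your argument that $mp(F)\le 2m$ (via the induced structure on the two degree levels) is cleaner than the paper's bare ``it is clear''.
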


\begin{proof}
Consider the graph $G=P_3 \cartprod K_{t}$, for $k\geq 3$ odd and $t=\frac{k-1}{2}$.  Clearly $|V(G)|=\frac{3(k-1)}{2}$ and $|E(G)|=\frac{3(k-1)(k-3)}{8} + \frac{2(k-1)}{2}=\frac{(k-1)(3k-1)}{8}$.  For $k=3$ (so $t=1$)  this is simply $P_3$ and $mp(P_3)=2$ while for $k=5$ (so $t=2$), this gives the graph $G = P_3 \cartprod K_2$, which is $C_6$ plue one edge joining two antipodal vertices and clearly $mp(G)=4$. 

We now show that this graph, which has $mp(G)=k-1$, is saturated. In $G=P_3 \cartprod K_{t}$, let the top $t$ vertices be $u_1, \ldots, u_t$,  all having degree $t$, the middle vertices $v_1,\ldots,v_t$ all having degree $t+1$, and the bottom vertices $w_1,\ldots, w_t$ all having degree $t$.  It is clear that $mpG)=2t=k-1$, taking for example the path $u_1 \ldots u_t v_t \ldots v_1$. Because of the symmetry of $G$, we only need to check the addition of the edges $u_1v_2$, $v_1w_2$ and $u_1w_1$.  

\begin{itemize}
\item{If the edge $u_1v_2$ is added, then the path $w_1 \ldots w_t v_t \ldots v_3 v_1u _1v_2$ has exactly $t+t-2+3=2t+1=k$ vertices.}
\item{If the edge $v_1w_2$ is added, then the path $u_1 \ldots u_t v_t \ldots v_2 w_2 v_1$ has exactly $t+t-1+2=2t+1=k$ vertices.}
\item{If the edge $u_1w_1$ is added, then the path $u_2 \ldots u_t v_t \ldots v_1 w_1 u_1$ has exactly $t-1+t+2=2t+1=k$ vertices.}
\end{itemize}

Hence $G$ is saturated with $mp(G)=k-1$.

We now consider two disjoint copies of $G$, $G_1$ and $G_2$.  We label this graph $2G$ and show that this graph is also saturated.  Again labelling the vertices of $G$ as above, by the symmetry of $G$ we only need to consider the addition of the edges joining $u_t$ in $G_1$ to $u_1$ in $G_2$, $u_t$ in $G_1$ to $v_1$ in $G_2$, and  $v_t$ in $G_1$ to $v_1$ in $G_2$:

\begin{itemize}
\item{If the edge  joining $u_t$ in $G_1$ to $u_1$ in $G_2$ is added, then the path $u_1 \ldots u_t$ in $G_1$ followed by $u_1 v_1 \ldots v_t$ in $G_2$ has exactly $t+t+1=2t+1=k$ vertices.}
\item{If the edge  joining $u_t$ in $G_1$ to $v_1$ in $G_2$ is added, then the path $v_1 \ldots v_tu_t$ in $G_1$  followed by $v_1  \ldots v_t$ in $G_2$ has exactly $t+1+t=2t+1=k$ vertices.}
\item{If the edge joining $v_t$ in $G_1$ to $v_1$ in $G_2$, is added, then the path $u_t \ldots u_1 v_1  \ldots v_t$ in $G_1$ followed by $v_1$ in $G_2$  has exactly $2t+1=k$ vertices.}

\end{itemize}

Hence $2G$ is saturated, and clearly  this also applies to $p \geq 3$ disjoint copies of $G$, $pG$.  Now $pG$ has $n=p\frac{3(k-1)}{2}$ vertices and $p\frac{(k-1)(3k-1)}{8}$ edges.  Hence, for  $n =  0 \pmod {\frac{3(k-1)}{2}}$, the number of edges is $\frac{n(3k-1)}{12}$, as stated.

\end{proof}

\begin{lemma} \label{lemma_even}
Let $G$ be a saturated graph with $mp(G)=k$.  Consider the graph $H=G+v$, where $v$ is a new vertex connected to all the vertices of $G$.  Then $mp(H)=k+1$, and $H$ is saturated.
\end{lemma}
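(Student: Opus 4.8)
The plan is to verify the two assertions of the lemma in turn: first that $mp(H)=k+1$, and then that $H$ is saturated. Throughout, write $N = |V(G)|$, so that in $H$ the apex vertex $v$ has degree $\deg_H(v) = N$, while every old vertex $x \in V(G)$ has $\deg_H(x) = \deg_G(x) + 1$. The key structural observation is that adding $1$ to every old degree preserves \emph{all} the $\leq$ and $\geq$ relations among the old vertices, so a sequence of old vertices is degree monotone in $H$ exactly when it is degree monotone in $G$ (as a vertex sequence, ignoring adjacency). Also $\deg_H(v) = N$ is at least as large as $\deg_H(x)$ for every old $x$, since $\deg_G(x) \le N-1$ forces $\deg_G(x)+1 \le N$. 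Hence $v$ can always be appended at the high-degree end of any old degree monotone path, and $v$ is adjacent to everything, so the appending is legal.

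For the lower bound $mp(H) \ge k+1$: take a degree monotone path $P = x_1 x_2 \ldots x_k$ in $G$ realizing $mp(G) = k$, ordered so the degrees are nondecreasing. Then in $H$ the sequence $x_1 x_2 \ldots x_k v$ is a path (the last edge $x_k v$ exists since $v$ is universal) whose degree sequence is $\deg_G(x_1)+1 \le \cdots \le \deg_G(x_k)+1 \le N = \deg_H(v)$, hence degree monotone of length $k+1$. For the upper bound $mp(H) \le k+1$: let $Q$ be a longest degree monotone path in $H$. If $v \notin Q$ then $Q$ lies in $G$ and is degree monotone there, so $|Q| \le k$. If $v \in Q$, then since $\deg_H(v)$ is maximum, $v$ must be an endpoint of $Q$ when $Q$ is written in monotone order (it cannot be interior with a strictly larger-degree neighbour on one side)\,--\,more precisely, if $v$ were interior then both neighbours of $v$ on $Q$ would need degree $\ge \deg_H(v) = N$, forcing them to also be degree-$N$ old vertices, which is possible only if $N-1 = N$, absurd; so $v$ is an endpoint. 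Deleting $v$ from $Q$ leaves a degree monotone path in $G$ of length $|Q|-1 \le k$, so $|Q| \le k+1$. Combining, $mp(H) = k+1$.

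For saturation: let $e = xy$ be a non-edge of $H$. Since $v$ is universal, $e$ joins two old vertices $x,y \in V(G)$, so $e$ is also a non-edge of $G$. Because $G$ is saturated with $mp(G)=k$, the graph $G+e$ has $mp(G+e) \ge k+1$; let $R = z_1 \ldots z_m$ ($m \ge k+1$) be a degree monotone path in $G+e$, written in nondecreasing degree order. Note $\deg_{G+e}(z_i) = \deg_G(z_i) + [z_i \in \{x,y\}]$, and crucially $\deg_{H+e}(z_i) = \deg_{G+e}(z_i) + 1$ for each old vertex, so $R$ is still degree monotone in $H+e$, and all its degrees are at most $(N-1)+1+1$\,--\,wait, this needs a small check: a vertex incident to $e$ has $H+e$-degree $\deg_G(z_i)+2 \le N+1$, but $\deg_{H+e}(v)=N$ still, so $v$ need \emph{not} dominate. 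To avoid this wrinkle I would instead argue directly: $R$ has $m \ge k+1$ vertices, so $|R| \ge k+1$ already, hence $mp(H+e) \ge m \ge k+1 = mp(H)+1 - 1$\,--\,actually we want $mp(H+e) > mp(H) = k+1$, i.e. $mp(H+e) \ge k+2$. So I take the degree monotone path $R$ in $G+e$ (length $\ge k+1$), and append $v$ at its top-degree end: $v$ is adjacent to $z_m$, and $\deg_{H+e}(v) = N \ge \deg_{H+e}(z_m)$ unless $z_m$ is incident to $e$ and has $\deg_G(z_m) = N-1$. If $\deg_G(z_m)=N-1$ then $z_m$ was already adjacent to all old vertices, so $e$ incident to $z_m$ is impossible; thus $\deg_{H+e}(z_m) = \deg_G(z_m)+1 \le N$ and the inequality holds. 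Hence $z_1 \ldots z_m v$ is degree monotone in $H+e$ of length $m+1 \ge k+2 > k+1 = mp(H)$, so $H$ is saturated.

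The main obstacle is the bookkeeping with the apex degree after adding $e$: one must confirm that $\deg_{H+e}(v) = N$ still dominates the relevant endpoint of the transplanted path, and the clean way to see this is the remark that the only vertices whose $H+e$-degree could reach $N$ among old vertices are those with $\deg_G = N-1$, and such vertices are already universal in $G$ and hence cannot be an endpoint of a \emph{new} non-edge $e$. Everything else is the routine "add $1$ to every old degree, monotonicity is preserved, the universal vertex hangs off the high end" argument, already used implicitly in the paper's earlier constructions.
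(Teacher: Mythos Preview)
Your overall strategy matches the paper's (very brief) proof, and your saturation argument is actually more careful than the paper's: you correctly identify and resolve the wrinkle that an endpoint of $e$ might have $H+e$-degree exceeding $\deg_{H+e}(v)=N$, disposing of it via the observation that a vertex of $G$-degree $N-1$ is already universal in $G$ and so cannot be incident to a non-edge. That part is fine.

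There is, however, a genuine error in your upper-bound argument for $mp(H)\le k+1$. You claim that if $v$ were interior on a monotone path $Q$ then \emph{both} neighbours of $v$ on $Q$ would need $H$-degree $\ge N$, and that an old vertex of $H$-degree $N$ is impossible. Both assertions are false. First, only the neighbour on the high-degree side of $v$ needs degree $\ge N$. Second, any old vertex $x$ with $\deg_G(x)=N-1$ has $\deg_H(x)=N$, so such vertices can exist; for instance, take $G=K_3$, where in $H=K_4$ every vertex has degree $3$ and $v$ certainly sits in the interior of a Hamiltonian monotone path. So your ``$v$ must be an endpoint'' claim is simply wrong.

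The fix is short. If $v$ is interior on $Q$, write its neighbours on $Q$ as $a$ (lower side) and $b$ (higher side); then $\deg_H(b)\ge N$ forces $\deg_H(b)=N$, i.e.\ $\deg_G(b)=N-1$, so $b$ is adjacent to every vertex of $G$, in particular to $a$. Hence deleting $v$ and using the edge $ab$ yields a path entirely in $G$, still degree monotone (all old degrees shift uniformly), of length $|Q|-1\le k$. Combined with the endpoint case you already handled, this gives $|Q|\le k+1$. The paper's own proof glosses over this point entirely, so your instinct to justify the upper bound was good; only the justification needs repair.
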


\begin{proof}
Consider the graph $H$.  Then $deg(v)=|V(G)|$ and $v$ has maximum degree.  So any degree monotone path in $G$ can be extended in $H$ by including vertex $v$, and hence $mp(H)=mp(G)+1=k+1$.

Now since $G$ is saturated, adding any edge $e$ creates a degree monotone path of length $k+1$, and hence, adding the same edge in $H$ creates a path of length $k+2$.  The only edges which can be added in $H$ are those that can be added in $G$, and hence $H$ is saturated with $mp(H)=k+1$ as required.
\end{proof}

This lemma, together with Theorem \ref{thmcartprod}, leads to the following result:

\begin{theorem} \label{keven}
For $k \geq 4$ and $k=0 \pmod 2$, $h(n,k) \leq \frac{n(3k+8)(k-2)}{4(3k-4)}$ for $n=0 \pmod {\frac{3k-4}{2}}$.
\end{theorem}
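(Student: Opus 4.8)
The plan is to combine the odd-case construction of Theorem~\ref{thmcartprod} with the coning operation of Lemma~\ref{lemma_even} and then pass to disjoint copies. Since $k$ is even, $k-1$ is odd, so Theorem~\ref{thmcartprod}, applied with the odd integer $k-1$, provides the graph $G_0 = P_3 \cartprod K_t$ with $t = \frac{k-2}{2}$ (an integer, since $k$ is even); it is saturated, $mp(G_0) = k-2$, it has $3t = \frac{3(k-2)}{2}$ vertices and $\frac{(k-2)(3k-4)}{8}$ edges. Applying Lemma~\ref{lemma_even} to $G_0$ (with the lemma's ``$k$'' equal to $k-2$), the cone $G_1 := G_0 + a$, where $a$ is a new vertex joined to every vertex of $G_0$, is saturated with $mp(G_1) = k-1$; it has $\frac{3k-4}{2}$ vertices and $\frac{(k-2)(3k-4)}{8} + \frac{3(k-2)}{2} = \frac{(k-2)(3k+8)}{8}$ edges. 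Finally let $pG_1$ be the disjoint union of $p$ copies of $G_1$; then $|V(pG_1)| = p\cdot\frac{3k-4}{2}$, which realises exactly the $n$ with $n \equiv 0 \pmod{\frac{3k-4}{2}}$, and $|E(pG_1)| = p\cdot\frac{(k-2)(3k+8)}{8} = \frac{n(k-2)(3k+8)}{4(3k-4)}$, precisely the claimed value. Since $mp(pG_1) = mp(G_1) = k-1 < k$, once we show $pG_1$ is saturated it is $k$-saturated and the theorem follows.

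It therefore remains to prove that $pG_1$ is saturated, i.e.\ that adding any non-edge $e$ strictly increases $mp$ above $k-1$. If $e$ joins two vertices of the same copy this is immediate from the saturation of $G_1$, the remaining copies being inert. So assume $e = xy$ with $x$ in one copy and $y$ in another; we must produce a degree monotone path on at least $k = 2t+2$ vertices through $e$. The automorphism group of $G_1$ contains $S_t \times \mathbb{Z}_2$ (permute the $t$ columns of $P_3 \cartprod K_t$, and flip the two ends of $P_3$) and fixes the apex $a$, so $G_1$ has just three vertex orbits: the apex (degree $3t$), the $2t$ ``end'' vertices $u_i, w_i$ (degree $t+1$), and the $t$ ``middle'' vertices $v_i$ (degree $t+2$). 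As the two copies may be swapped, only the six unordered pairs of orbits need to be considered, and in each case one glues a suitable monotone path of the first copy ending at $x$ to a suitable monotone path of the second copy starting at $y$, using the large-degree apex vertices whenever a path segment needs a high-degree endpoint. For instance, if $x=w_1$ and $y=w_1'$ are both end vertices, then after adding $e$ the path $u_1 u_2\cdots u_t\, v_t v_{t-1}\cdots v_1\, w_1\, w_1'$ has degree sequence $(t+1)^{t}(t+2)^{t+2}$ and exactly $2t+2$ vertices.

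The main obstacle is this last step: writing down all six glued paths and checking monotonicity uniformly in $t \geq 1$. The genuine subtlety is the degenerate case $t=1$ (that is, $k=4$, $G_1 = K_4 - e$), where the apex no longer has strictly largest degree but ties the unique middle vertex, so those of the six paths that relied on the apex being a strict maximum must be replaced by ad hoc paths in $2(K_4-e)$ (for example $u_1\, a\, v_1\, v_1'$ when $e$ joins the two middle vertices). For $p\geq 3$ the argument is unchanged, since an added edge still involves only two of the copies. With the six monotone paths in hand, the remaining bookkeeping — the vertex count pinning down the residue class of $n$, and the edge count yielding $\frac{n(3k+8)(k-2)}{4(3k-4)}$ — is routine.
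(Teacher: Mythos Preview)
Your argument is correct and follows the same approach as the paper: cone the odd construction $P_3\,\Box\,K_t$ with $t=\frac{k-2}{2}$ to obtain a $k$-saturated block on $\frac{3k-4}{2}$ vertices and $\frac{(k-2)(3k+8)}{8}$ edges, then take $p$ disjoint copies and verify saturation for cross-copy edges. Your treatment is in fact more careful than the paper's: the paper dispatches the cross-copy edges not involving the apex with the remark that they ``have the same effect as they have in $2G$'' (which, taken literally, only yields a monotone path on $k-1$ vertices and needs the apex to extend to $k$), whereas you explicitly organise the six orbit-pair cases and flag the $t=1$ degeneracy where the apex merely ties rather than exceeds the middle-vertex degree.
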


\begin{proof}

In Theorem \ref{thmcartprod}, we proved that $G=P_3 \cartprod K_t$, where $t=\frac{j-1}{2}$ has $mp(G)=j-1$ and is saturated for $j \geq 3$ and $j$ odd.  Now by Lemma \ref{lemma_even}, $H=G+v$ has $mp(H)=j+1$ (even) and is saturated.  Then $H$ has $\frac{3(j-1)}{2}+1 = \frac{3j-1}{2}$ vertices and $\frac{(j-1)(3j-1)}{8}+\frac{3(j-1)}{2}=\frac{(3j+11)(j-1)}{8}$ edges. Now let $k=j+1$, and hence we have $\frac{3k-4}{2}$ vertices and $\frac{(3k+8)(k-2)}{2}$ edges. 

We now consider two disjoint copies of $H$, $H_1$ and $H_2$ and call this graph $2H$.  We need only consider edges which involve the new vertex of degree $\frac{3(k-2)}{2}$, which has the largest degree, as other edges have the same effect as they have in $2G$.  If we connect the vertex of degree $\frac{3(k-2)}{2}$ in $H_1$ to that of the same degree in $H_2$, we can take a path of length $k-1$ in $H_1$ ending with the vertex of maximum degree and then move to the vertex in $H_2$, giving a path of length $k$.  If we connect the vertex of degree $\frac{3(k-2)}{2}$ in $H_1$  to one of degree $\frac{k}{2}$ in $H_2$, then we take a path of length $k-1$ in $H_2$ ending with the vertex connected to the vertex in $H_1$, and then move to this vertex in $H_1$ to give a degree monotone path of length $k$.  Finally, if we connect the vertex of degree $\frac{3(k-2)}{2}$ in $H_1$  to one of degree $\frac{k+2}{2}$ in $H_2$, then we can take a degree  monotone path in $H_2$ of length $k-1$ ending on the vertex connected to $H_2$, and then the vertex in $H_2$ to give a degree monotone path of length $k$ in $2H$.

Hence $2H$ is saturated and this also applies to $p \geq 3$ disjoint copies of $H$, $pH$.  This graph has $n=p\frac{3k-4}{2}$ vertices and $p\frac{(3k+8)(k-2)}{8}$ edges.  Hence for $n=0 \pmod {\frac{3k-4}{2}}$, the number of edges is $\frac{n(3k+8)(k-2)}{4(3k-4)}$ as stated.

\end{proof}

We next show, as an example, how to extend the results given in Theorems  \ref{thmcartprod} and \ref{keven} , to the case where $n \not = 0 \pmod {f(k)}$, where $f(k)$ is the modulus given in these theorems.  We will demonstrate it in the case $k=5$.    

\begin{proposition} \label{mod6}
For $n \geq 8$, $h(n,5) \leq \frac{7n+c(n \pmod 6)}{6}$, where $c(n \pmod 6)=\{0,35,16,27,8,28\}$ for $n \pmod 6=0,1,2,3,4,5$ respectively.
\end{proposition}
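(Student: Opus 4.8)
The plan is to handle the six residue classes of $n$ modulo $6$ one at a time. The class $n\equiv 0$ is already Theorem~\ref{thmcartprod} (with constant $0$), so fix $r=n\bmod 6\in\{1,2,3,4,5\}$. For each such $r$ I would produce an explicit $5$-saturated graph on $n$ vertices as a disjoint union $H_n:=Q_r\cup pG$, where $G:=P_3\cartprod K_2$ is the $6$-vertex block used in Theorem~\ref{thmcartprod} (it is $5$-saturated, has $mp(G)=4$, and has $7$ edges), and $Q_r$ is one bounded ``correction gadget'' whose order is congruent to $r$ modulo $6$, so that $p=(n-|V(Q_r)|)/6\ge 0$ once $n\ge 8$. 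For the building blocks of $Q_r$ I would use $K_4$ (which is $5$-saturated by the convention on complete graphs, being complete on $4$ vertices) and $K_5-e$ (which has $mp=4$ and becomes $K_5$, with $mp=5$, when its missing edge is added); concretely one may take $Q_4=K_4$, $Q_2=2K_4$, $Q_3=K_4\cup(K_5-e)$, $Q_1=2K_4\cup(K_5-e)$, and for $r=5$ a gadget of order $\equiv 5\pmod 6$ such as $K_5-e$ itself.

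The first real step is to record the gluing principle that makes these disjoint unions tractable: if $F_1,\dots,F_m$ are $5$-saturated and, for all $i\ne j$, $x\in V(F_i)$ and $y\in V(F_j)$, the graph $F_i\cup F_j+xy$ contains a degree monotone path on $5$ vertices, then $\bigcup_i F_i$ is $5$-saturated. The proof is short: $mp(\bigcup_i F_i)=\max_i mp(F_i)\le 4$; an added edge lying inside some $F_i$ creates a degree monotone $P_5$ by $5$-saturation of $F_i$; an added edge joining two components creates one by hypothesis; and in either case the degrees of the vertices on the path are the same in $\bigcup_i F_i+e$ as in the one or two components actually involved, since the other components are vertex-disjoint and join nothing, so the path persists. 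In particular the only edge addable inside a $K_5-e$ completes $K_5$, giving $mp=5$, so all ``internal'' cases are free.

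By this principle the whole argument collapses to a finite case analysis over unordered pairs of block types: $(G,G)$ (already dealt with in the proof of Theorem~\ref{thmcartprod}), $(G,K_4)$, $(G,K_5-e)$, $(K_4,K_4)$, $(K_4,K_5-e)$ and $(K_5-e,K_5-e)$. Since $K_4$ is vertex-transitive and $K_5-e$ and $G$ each have exactly two degree classes, each pair splits into at most four sub-cases, classified by the degree class of each endpoint of the new edge, and for each sub-case I would exhibit one concrete degree monotone path on $5$ vertices, remembering that the new edge raises the degree of each of its endpoints by one. Typical instances: an edge between the two $K_4$'s gives a path with degree pattern $3,3,3,4,4$; an edge from a degree-$4$ vertex $r$ of a $K_5-e$ to a degree-$3$ vertex $v_1$ of a copy of $G$ gives $u_1u_2v_2v_1r$ with pattern $2,2,3,4,5$; the same $r$ joined to a degree-$2$ vertex $u_1$ of $G$ gives $r\,u_1v_1v_2w_2$ with pattern $5,3,3,3,2$; an edge from a $K_4$-vertex $a$ to a degree-$2$ vertex $u_1$ of $G$ gives $a\,u_1v_1v_2w_2$ with pattern $4,3,3,3,2$. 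This case check is the main obstacle: one must be confident the list of sub-cases is exhaustive (the degree bump can move an endpoint from one degree class into the next), and in a few sub-cases the monotone $P_5$ cannot stay inside the dense gadget but has to run out into a copy of $G$, which requires first pinning down precisely which degree monotone $P_4$'s end at each vertex of $G$.

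The rest is bookkeeping: $H_n$ has $|E(Q_r)|+7p$ edges with $p=(n-|V(Q_r)|)/6$, and inserting the order and size of $Q_r$ and simplifying yields $(7n+c(r))/6$ with $c(r)$ as stated; for instance $|V(Q_3)|=9$, $|E(Q_3)|=15$ give $(7n+27)/6$, $|V(Q_1)|=13$, $|E(Q_1)|=21$ give $(7n+35)/6$, $|V(Q_4)|=4$, $|E(Q_4)|=6$ give $(7n+8)/6$, and $|V(Q_2)|=8$, $|E(Q_2)|=12$ give $(7n+16)/6$, with the $r=5$ constant read off the chosen order-$\equiv5$ gadget in the same way. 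Finally one checks that in each residue class the smallest $n\ge 8$ still satisfies $n\ge|V(Q_r)|$, so $p\ge 0$ throughout and the construction is always available.
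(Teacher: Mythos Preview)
Your proposal is correct and follows essentially the same route as the paper: the same three building blocks $G=P_3\cartprod K_2$, $K_4$, and $K_5-e$, assembled into exactly the same correction gadgets $Q_r$ for each residue class, with the edge count read off at the end. You are in fact more careful than the paper, which merely asserts that ``it is easy to check'' that all pairwise disjoint unions of the blocks remain $5$-saturated, whereas you spell out the gluing principle and the finite case analysis over unordered pairs of block types.
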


\begin{proof}
Consider the graphs $G=P_3 \cartprod K_2$, $H=K_5-e$ for $e \in E(K_5)$ and $K_4$, which are sturated for $k=5$ and clearly  $mp(G)=mp(H)=mp(K_4)=4$.  Every integer $n \geq 8$ can be represented in the form $6x+5y+4z$ with $x,y,z$ non-negative integers.  Hence $x$ copies of $G$, $y$ copies of $H$ and $z$ copies of $K_4$ produce graphs for every $n \geq 8$.  It is easy to check that any graph made up of two vertex disjoint copies of any combination of $G$, $H$ and $K_4$  is also saturated, and hence any combination of  vertex disjoint copies of these graphs is saturated.

Hence any graph made up of a disjoint combination of  any number of these three graphs is saturated.

For $n= 0 \pmod 6$, the result follows immediately by substituting for $k=5$ in Theorem \ref{thmcartprod}.

For  $n= 1 \pmod 6$, we take the graph made up of $\frac{n-13}{6}$ copies of $G$, two copies $K_4$ and one copy of $H$.  The graph thus obtained is saturated and has $\frac{7(n-13)}{6}+ 12+ 9 = \frac{7n+35}{6}$ edges.

For $n= 2 \pmod 6$, we take the graph made up of $\frac{n-8}{6}$ copies of $G$ and  two copies $K_4$.  The graph thus obtained is saturated and has $\frac{7(n-8)}{6}+ 12 = \frac{7n+16}{6}$ edges.

For  $n= 3 \pmod 6$, we take the graph made up of $\frac{n-9}{6}$ copies of $G$, one copy of $K_4$ and one copy of $H$.  The graph thus obtained is saturated and has $\frac{7(n-9)}{6}+ 6+ 9 = \frac{7n+27}{6}$ edges.

For  $n= 4 \pmod 6$, we take the graph made up of $\frac{n-4}{6}$ copies of $G$ and one copy of  $K_4$ .  The graph thus obtained is saturated and has $\frac{7(n-4)}{6}+ 6 = \frac{7n+8}{6}$ edges.

For  $n= 5 \pmod 6$, we take the graph made up of $\frac{n-5}{6}$ copies of $G$ and one copy of $H$.  The graph thus obtained is saturated and has $\frac{7(n-5)}{6}+ 9 = \frac{7n+28}{6}$ edges.

\end{proof}

Note : Applying the technique demonstrated in Proposition \ref{mod6},  we can extend Theorems \ref{thmcartprod} and \ref{keven} to cover all $n \geq (k-1)(k-2)$, and we state it rather crudely as follows :  
 
\begin{enumerate}
\item{For $k \geq 3$, $k = 1 \pmod 2$  and $n \geq (k-1)(k-2)$, $h(n,k) \leq  \frac{n(3k-1)}{12} + O(k^2)$.}
\item{For $k \geq 4$, $k = 0 \pmod 2$  and $n \geq  (k-1)( k-2)$, $h(n,k)  \leq \frac{n(3k+8)(k-2)}{4(3k-4)} + O(k^2)$.}
\end{enumerate}

\section{Determination of $h(n,k)$ for $k=2,3,4$.}

We first determine the exact value of $h(n,2)$ and $h(n,3)$.

\begin{proposition}
\mbox{}
\begin{enumerate}
\item{$h(n,2)=0$.}
\item{$h(n,3)=\frac{n}{2}$ for $n=0 \pmod 2$, while $h(n,3)= \frac{n+1}{2}$ for $n=1 \pmod 2$.}
\end{enumerate}
\end{proposition}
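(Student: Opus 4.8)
The first part is essentially trivial and I would dispatch it in a line: the empty graph $\overline{K}_n$ has $mp(\overline{K}_n)=1<2$, while $\overline{K}_n+e$ contains an edge and so has $mp\ge 2$; hence $\overline{K}_n$ is $2$-saturated with no edges, and since the number of edges is non-negative, $h(n,2)=0$. For the second part (I work with $n\ge 2$, the only exception being the trivial case $n=1$, where $K_1$ is vacuously $3$-saturated) I first note that $mp(G)<3$ just means that $G$ has no degree-monotone $P_3$, equivalently that in every $P_3$ $xvy$ the middle degree $\deg(v)$ is a strict local extremum of $\deg(x),\deg(v),\deg(y)$. I will not need the full structure of such graphs; I only use that $K_2$, $P_3$ and any vertex-disjoint union of $K_2$'s and $P_3$'s have $mp=2$ (while $mp(K_3)=3$), together with the elementary fact that $mp$ of a disconnected graph equals the maximum of $mp$ over its components, since a path and the degrees read along it live entirely inside one component.

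For the upper bound I would exhibit explicit $3$-saturated graphs: for even $n$ take $G=\frac n2 K_2$, and for odd $n\ge 3$ take $G=\frac{n-3}{2}K_2\cup P_3$; in either case $mp(G)=2$. Saturation is checked by sorting the non-edges according to which components they join. Joining two $K_2$'s $uu',vv'$ raises $\deg u,\deg v$ to $2$ and creates the monotone path $u'\,u\,v$ with degree sequence $1,2,2$; joining a $K_2$ to a leaf of the $P_3$ creates a monotone path with degrees $1,2,2,2$; joining a $K_2$ to the centre of the $P_3$ (now of degree $3$) creates the monotone path $1,2,3$; and the unique remaining non-edge inside the $P_3$ completes it to $K_3$ with $mp=3$. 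Thus every added edge creates a degree-monotone $P_3$, and counting edges gives $h(n,3)\le \frac n2$ for even $n$ and $h(n,3)\le\frac{n+1}{2}$ for odd $n$.

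For the matching lower bound I argue by a degree-sum count. Let $G$ be $3$-saturated on $n\ge 2$ vertices with $|E(G)|\le\lceil n/2\rceil-1$; then $\sum_v\deg(v)=2|E(G)|\le n-1<n$, so $G$ has at least one isolated vertex. If $G$ has two isolated vertices $z_1,z_2$, then $G+z_1z_2$ is $G$ with an extra $K_2$-component and every other component untouched, so $mp(G+z_1z_2)=2<3$, contradicting saturation. If $G$ has exactly one isolated vertex $z$, then the remaining $n-1$ vertices each have degree $\ge 1$ and together have degree sum $\le n-1$, which forces each of them to have degree exactly $1$; so $G$ is a perfect matching on $n-1$ vertices together with $z$. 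Picking a matching edge $uv$ and adding $zu$ turns the component of $u$ into the $P_3$ $z\,u\,v$ with degree sequence $1,2,1$ (hence $mp=2$) and leaves all the other $K_2$'s alone, so again $mp(G+zu)=2<3$, a contradiction. Hence $|E(G)|\ge\lceil n/2\rceil$, and combined with the construction this proves $h(n,3)=\frac n2$ for even $n$ and $\frac{n+1}{2}$ for odd $n$.

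The genuinely routine part is the short case check of added edges in the two constructions, where each case just amounts to reading off a monotone degree triple. The one point requiring care is the lower bound in the presence of a single isolated vertex: one must first exploit the edge bound to pin the rest of $G$ down as a $1$-regular graph, and only this rigidity makes it possible to name an edge whose addition fails to create a monotone $P_3$; without that step a sparser $3$-saturated graph carrying a lone degree-$0$ vertex would not be obviously excluded.
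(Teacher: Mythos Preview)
Your proof is correct. The constructions you give for the upper bound are exactly the ones the paper uses, and your verification that each added edge creates a monotone $P_3$ is the same routine case check.

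Where you genuinely diverge from the paper is in the lower bound. The paper appeals to Proposition~\ref{prop1} to get $h(n,3)\ge sat(n,3)=\lfloor n/2\rfloor$ immediately, and then for odd $n$ observes that the \emph{unique} graph realising $sat(n,3)$---namely $\lfloor n/2\rfloor\,K_2$ together with an isolated vertex---fails to be $3$-saturated (joining the isolated vertex to a $K_2$ yields a $P_3$ with degree pattern $1,2,1$), which forces $h(n,3)\ge sat(n,3)+1$. You instead run a self-contained degree-sum argument: too few edges forces an isolated vertex, two isolates can be joined harmlessly, and a single isolate forces the rest to be $1$-regular, after which the same $1,2,1$ edge-addition kills saturation. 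Your route avoids any reference to $sat(n,P_3)$ or the uniqueness of its extremal graph, which makes it more elementary; the paper's route has the advantage of making the link to classical saturation numbers explicit and of getting the even case for free in one line. Both arrive at precisely the same numbers with essentially the same final ``add an edge to the isolated vertex'' contradiction.
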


\begin{proof}
\mbox{}\\
\noindent 1. \indent $mp(G)=1$ if and only if $G$ is a graph with no edges, and any edge we add gives $mp(G+e)=2$.
\medskip

\noindent 2. \indent By Proposition \ref{prop1}, $h(n,3) \geq sat(n,3)=\lfloor \frac{n}{2} \rfloor$.  Consider $n=0 \pmod 2$.  Let $G$ be made up of $\frac{n}{2}$ copies of $K_2$.  This is the only graph which achieves $sat(n,3)$.  Clearly $mp(G)=2$, and adding any edge will create a copy of $P_4$ so $mp(G+e)=3$.

Now if $n=1 \pmod 2$, the graph $G$ made up of $\lfloor \frac{n}{2} \rfloor$ copies of $K_2$ and one copy of $K_1$ achieves $sat(n,3)$, and is the only such graph.  Again $mp(G)=2$.  If we add an edge joining two vertices from disjoint copies of $K_2$ then we get a copy of $P_4$ and $mp(G+e)=3$;  however, if we add a vertex joining a vertex from $K_2$ to the vertex in $K_1$, this gives a copy of $P_3$, and $mp(G+e)=2$, hence $h(n,3)  \geq sat(n,3)+1$.

Consider the graph $G$ made up of $\frac{n-3}{2}$ copies of $K_2$, and a single copy of $P_3$. Again it is clear that $mp(G)=2$.  Adding an edge  joining two vertices from disjoint copies of $K_2$ then we get a copy of $P_4$ and $mp(G+e)=3$, while adding an edge joining a vertex from $K_2$ to one in $P_3$ gives $mp(G+e)=4$.  The number of edges in this graph is $\frac{n+1}{2}= sat(n,3)+1$ as stated.
\end{proof}

We now determine the exact value of $h(n,4)$.  For this we need another lemma:

\begin{lemma} \label{lemma_2}
Let $G$ be a saturated connected graph with $|E(G)| \leq |V(G)|$ and $2 \leq mp(G) \leq 3$. Then 
\begin{enumerate}
\item{If $mp(G)=2$ then $G=K_{1,\Delta}$ and for $\Delta \geq 2$, $mp(G+e)=3, \forall e \not \in E(G)$.}
\item{If $mp(G)=3$ then $G=K_3$ which is saturated by definition.}
\end{enumerate}
\end{lemma}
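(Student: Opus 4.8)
The plan rests on the remark that a connected graph $G$ with $|E(G)|\le|V(G)|$ is either a tree (if $|E(G)|=|V(G)|-1$) or unicyclic (if $|E(G)|=|V(G)|$). The tree case is immediate in both parts: for part~1, a saturated tree is $K_{1,n-1}$ by Corollary~\ref{cor_1} (the cases $|V(G)|\le2$ being trivial), and $mp(K_{1,\Delta})=2$ while any new edge joins two leaves and produces the monotone path leaf--leaf--centre of degrees $2,2,\Delta$, so $mp(G+e)=3$ when $\Delta\ge2$; for part~2 a saturated tree would again be $K_{1,n-1}$, but $mp(K_{1,n-1})=2\ne3$, so $G$ is not a tree. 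The substance is thus the unicyclic case. Here I first observe: if $G$ is a pure cycle $C_g$ then $mp(G)=g$, which is $\ge3$ (excluding part~1) and equals $3$ only for $g=3$, i.e.\ $G=K_3$; otherwise $G$ has a leaf, and by Lemma~\ref{lemma_1} together with saturation every leaf is adjacent to every maximum-degree vertex, which forces a \emph{unique} maximum-degree vertex $v^{*}$ with $\deg(v^{*})=\Delta\ge3$ and all leaves attached to $v^{*}$. Both parts are then finished by contradicting this configuration.

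For part~1, $mp(G)=2$ means $G$ has no degree monotone $P_3$, so every vertex of degree at least $2$ is a strict local maximum or a strict local minimum of the degree function; hence the non-leaf vertices split into independent sets $M^{-}$ (strict minima) and $M^{+}$ (strict maxima) with all edges among non-leaves running between them, and every leaf sitting on an $M^{+}$-vertex. Deleting the leaves yields a connected graph $B$ on $M^{-}\cup M^{+}$ which is still unicyclic, bipartite, and in which every vertex except $v^{*}$ has degree $\ge2$. Since $|E(B)|=|V(B)|$, a degree-sum count gives $\deg_B(v^{*})\le2$ and forces $B$ to be either a single even cycle or an even cycle with one pendant path of length at most $2$ ending at $v^{*}$; in either case one finds a non-$v^{*}$ vertex of $M^{+}$ of $G$-degree $2$ whose neighbours are not leaves (or, in the tail case, an $M^{+}$-neighbour of the degree-$3$ vertex of $B$ whose forced large degree contradicts $B$'s degree sequence), contradicting the strict-extremum dichotomy. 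So $G$ is a tree, hence $G=K_{1,\Delta}$.

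For part~2, $mp(G)=3$. Because $v^{*}$ is the unique maximum, any path through $v^{*}$ not ending at $v^{*}$ is already non-monotone, so when we add an edge $\ell w$ from a leaf $\ell$ to a non-neighbour $w$, the new degree monotone $P_4$ (which must use $\ell w$) can only have the shape $\ell-w-b-a$, forcing $\deg_G(w)<\deg_G(b)\le\deg_G(a)$; as $w$ ranges over all vertices other than $v^{*}$ and $\ell$, it follows that $v^{*}$ is the \emph{only} vertex with no strictly-higher-degree neighbour. Consequently, from every vertex one can climb along strictly increasing degrees to $v^{*}$; that climbing path is degree monotone, hence has at most $3$ vertices, so every vertex — in particular every cycle vertex — lies within distance $2$ of $v^{*}$. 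Since branches of a unicyclic graph do not shorten distances between cycle vertices, this bounds the cycle length: $g\le4$ when $v^{*}$ lies on the cycle, and $g=3$ with $v^{*}$ adjacent to the cycle otherwise. Each surviving graph is now explicit — $C_4$ or $C_3$ with all pendant leaves at $v^{*}$, or a triangle with the $v^{*}$-pendant attached — and in each one either exhibits a degree monotone $P_4$ (e.g.\ $u_1u_2u_3v^{*}$, or $y\,z\,u\,v^{*}$) or checks that the graph is not saturated; the only survivor is $G=K_3$.

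The main obstacle is the unicyclic case, and within it the delicate points are the edge-count pinning down the structure of $B$ together with its final contradiction (part~1), and the verification that $v^{*}$ is the unique local degree-maximum followed by the inspection of the few surviving small graphs (part~2); the tree cases and the behaviour of $K_{1,\Delta}$ and $K_3$ are routine.
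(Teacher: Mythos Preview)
Your overall strategy differs markedly from the paper's. For part~1 the paper argues in two lines (two branched cycle vertices contradict Lemma~\ref{lemma_1}; a single branched cycle vertex already yields a monotone $P_3$), and for part~2 it reduces to a triangle with one high-degree vertex $x$ and then runs an explicit case analysis on the leaves and branches at $x$. You instead build a bipartite local-extremum structure for part~1 and a ``strictly-increasing climb to $v^{*}$'' for part~2. The second idea is attractive, but there is a real gap in the step that drives it.

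You assert that after adding $\ell w$ the new degree-monotone $P_4$ ``must use $\ell w$'' and hence has the shape $\ell,w,b,a$. This is not true: the path may lie entirely in $G$ and contain $w$ but not $\ell$. Raising $\deg(w)$ by one can turn a non-monotone $G$-path $p_1p_2p_3p_4$ (with $w$ among the $p_i$) into a monotone path in $G+\ell w$ without the edge $\ell w$ appearing at all --- for instance, $G$-degrees $3,2,3,3$ with $w=p_2$ become $3,3,3,3$. So your justification for ``every $w\neq v^{*}$ has a strictly-higher-degree neighbour'' is incomplete. (As it happens, in each such missing case one can still read off a $G$-neighbour of $w$ of strictly larger degree on the path, so the conclusion is salvageable --- but that analysis has to be supplied.) There is a parallel looseness in part~1: your tail-case contradiction (``an $M^{+}$-neighbour of the degree-$3$ vertex \dots'') implicitly assumes the degree-$3$ vertex of $B$ lies in $M^{-}$; when it lies in $M^{+}$ you must instead exhibit a second $M^{+}$-vertex on the even cycle and derive the contradiction from that. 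Both gaps are fixable, but the sketch as written does not close them, and for part~1 the paper's two-line argument is considerably simpler than the route you take.
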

\begin{proof}
Let $G$ be such a graph.  Then since $|E(G)| \leq |V(G)|$, $G$ is either a tree or is unicyclic.

If $G$ is a tree then either all leaves are adjacent to the same vertex which has maximum degree, that is $G=K_{1,\Delta}$.  Then $mp(G)=2$ and, in case $\Delta \geq 2$,  adding any edge between two leaves $u$ and $v$ gives $mp(G+uv)=3$.  If $G$ is a tree but not $K_{1,\Delta}$, then there is a leaf $u$ and a vertex $v$ of maximum degree which are not adjacent, and hence by Lemma \ref{lemma_1}, $G$ is not saturated.

So suppose $G$ is unicyclic.  Then it cannot be a simple cycle $C_n$ on $n \geq 4$ vertices as otherwise $mp(C_n)=n \geq 4$. Observe that $C_3=K_3$ is saturated by definition.

So $G$ is unicyclic with at least one leaf if the cycle has at least four vertices. 

Suppose $mp(G)=2$.   If there are at least two vertices on the cycle which have branches attached, then on one of these branches (including the vertex on the cycle) there must be a vertex of maximum degree, and on the other branch there must be a leaf not connected to this vertex of maximum degree, and hence by Lemma \ref{lemma_1} $G$ is not saturated.

So there is precisely one vertex on the cycle with degree greater than two, which means that $mp(G)>2$, a contradiction.

So now suppose $mp(G)=3$. If there are at least two vertices on the cycle which have branches attached, then on one of these branches (including the vertex on the cycle) there must be a vertex of maximum degree, and on the other branch there must be a leaf not connected to this vertex of maximum degree, and hence by Lemma \ref{lemma_1} $G$ is not saturated.

So there is precisely one vertex on the cycle with degree greater than two, and if  the cycle has at least four vertices, $mp(G)\geq 4$, a contradiction.

So it remains to consider the cycle $K_3$ with exactly one vertex $x$ with degree greater than two.  

Suppose the vertex $x$ has $p$  leaves and $q$ branches  with $p,q \geq 0$.  We consider several cases :  
 
\begin{enumerate}
\item{ If $p \geq 2$, we connect  two leaves to get $H$ with $mp(H) = mp(G) = 3$, and $G$ is not saturated.  Hence $p \leq 1$.}
 \item{I f $p = 1$ and $q \geq 1$, then  either $x$ is of maximum degree $\Delta \geq 3$,  and there is a leaf  not connected to $x$, so by Lemma \ref{lemma_1} $G$ is not saturated, or  there is a vertex of maximum degree in one of these branch, so the leaf at $x$ is not connected to the vertex of maximum degree and again by Lemma \ref{lemma_1}, $G$ is not saturated.}
\item{If $p = 1$ and $ q = 0$, then $G$ is $K_3$  with a leaf attached and clearly it  is not saturated.}
\item{If  $p = 0$ and   $q \geq 2$, then  either $x$ is of maximum degree $\Delta \geq  3$ and there is a leaf in the branch not connected to $x$, so by Lemma \ref{lemma_1} $G$ is not saturated, or  there is a vertex of maximum degree in one of these branches, so the leaf at $x$ is not connected to the vertex of maximum degree and again by Lemma \ref{lemma_1}, $G$ is not saturated.}
\item{If $p = 0$ and $q = 1$, then $deg(x) = 3$.  Let $z$  be the neighbour of $x$ in this branch.  If $deg(z) \geq 3$ then $mp(G) \geq 4$, a contradiction. Hence $deg(z) = 2$, and let $w$ be the neighbour of $z$
 
If $deg(w) =1$ then $x$ has maximum degree, $w$ is not connected to $x$ and by Lemma \ref{lemma_1}, $G$  is not saturated.  So $deg(w)  \geq 2$.  we consider two cases:

\noindent \emph{Case 1: $deg(w)= 2$}.

Let $u$ be the neighbour of $w$.  If $deg(u)  \leq 2$, then we have $uwzv$ a degree monotone path of length four. So $deg(u) \geq 3$.
 
If $ deg(u)>3$ then if the edge $xw$ is  added, $mp(G+xw) = 3$ and $G$  is not saturated.  Hence $deg(u)=3$.  Let $s$ and  $y$ be the neighbours of $u$.  If either $s$ or $y$ have degree at least three,  we have $zwux$ or $zwuy$ degree montone paths of length four, a contradiction. So both $s$ and $y$ have degree at most two.
 
If either $s$ or $y$ is a leaf,  say $s$, then either $\Delta=3$ and $s$ is  leaf is not connected to $x$, so by Lemma \ref{lemma_1}  $G$ is not saturated, or  $\Delta \geq 4$ and is realized by a vertex $r$ say  on the branch at $y$.  Again $s$ is a leaf not  adjacent to $r$ and by lemma 2,2  $G$ is not saturated .
 
So $deg(s)= deg(y) = 2$, and either the maximum degree $\Delta=3$, and there is a leaf not adjacent to $x$, so by Lemma \ref{lemma_1} $G$ is not saturated, or there is a vertex $r$ of maximum degree $\Delta \geq 4$ which is on one of the branches starting at $s$ or $y$, say $s$.  But then there is a leaf on the branch starting at $y$ not adjacent to the vertex $r$, and again by Lemma \ref{lemma_1} $G$ is saturated.

\noindent \emph{Case 2: $deg(w)= t \geq 3$}.

Let $x_1, \ldots ,x_ t$ be the neighbors of $w$.  If for some $j$, $deg(x_j) = 1$,  then either $\Delta=3$ and $x_j$ is not connected to $x$, so by Lemma \ref{lemma_1} $G$ is not saturated, or $\Delta \geq 4$  and is realized by a vertex $r$ on a branch at some $x_i$, $i \not = j$ . Then $x_j$ is a leaf not adjacent to $r$ and by Lemma \ref{lemma_1} $G$ is not saturated.
 
So $deg(x_j) \geq  2$ for $j = 1, \ldots,t$.    Now  if $\Delta=,3$ then a leaf on one these branches starting at $x_1, \ldots ,x_ t$   is not connected to $x$ and by Lemma \ref{lemma_1} $G$ is not saturated.  Otherwise $\Delta \geq 4$ and  a vertex $r$ of maximum degree appears on the branch starting at say $x_j$ . Then a leaf on  any other branch is not connected to $r$ and by Lemma \ref{lemma_1} $G$ is not saturated.}
\end{enumerate}
Hence $G=K_3$ is the only saturated graph with $|E(G)| \leq |V(G)|$ and $mp(G)=3$.

\end{proof}

\begin{theorem}
For $n = 0 \pmod 3$, $h(n,4)=n$ while for $n=1,2 \pmod 3$, $h(n,4)=n+1$.
\end{theorem}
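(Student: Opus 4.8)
The proof splits into a lower bound and three explicit constructions. Theorem \ref{lower} already gives $h(n,4)\ge n$ for all $n\ge 3$, so the only extra work is to show $h(n,4)\ge n+1$ when $n\not\equiv 0\pmod 3$, after which the constructions supply matching upper bounds.

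\emph{Lower bound for $n\not\equiv 0\pmod 3$.} Suppose, for contradiction, that some $4$-saturated graph $G$ on $n$ vertices has exactly $n$ edges, and let $G_1,\dots,G_t$ be its components. Arguing exactly as in the proof of Theorem \ref{lower}: $G$ has no component on at most two vertices (such a component could be joined to a maximum-degree vertex of another component without creating a degree monotone path on four vertices), and by Corollary \ref{cor_1} every component on at least three vertices fails to be a tree, hence $|E(G_i)|\ge|V(G_i)|$. Since $\sum_i|E(G_i)|=n=\sum_i|V(G_i)|$, equality holds throughout, so each $G_i$ is connected and unicyclic. Moreover each $G_i$ is saturated for degree monotone paths: an internal non-edge $e$ of $G_i$ must, by $4$-saturation of $G$, create a degree monotone path on four vertices, and that path necessarily lies inside $G_i+e$ because every other component is unchanged and has $mp<4$. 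Thus $G_i$ is a connected saturated graph with $|E(G_i)|=|V(G_i)|$ and $2\le mp(G_i)\le 3$, so Lemma \ref{lemma_2} applies; since $G_i$ is not a tree, the case $mp(G_i)=2$, $G_i=K_{1,\Delta}$ is excluded, and we conclude $G_i=K_3$. But then $n=3t$, contradicting $n\not\equiv 0\pmod 3$. Hence $h(n,4)\ge n+1$ in this case.

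\emph{Constructions.} When $n\equiv 0\pmod 3$, take $G$ to be $\tfrac n3$ disjoint triangles: $mp(G)=3$, every non-edge joins two triangles, and adding such an edge $xy$ produces the degree monotone path consisting of the two degree-$2$ vertices of $x$'s triangle followed by $x$ and $y$ (degree sequence $2,2,3,3$), so $G$ is $4$-saturated with $n$ edges, matching the lower bound. When $n\equiv 1\pmod 3$ take $\tfrac{n-4}{3}$ disjoint triangles together with one copy of $K_4-e$; when $n\equiv 2\pmod 3$ take $\tfrac{n-5}{3}$ disjoint triangles together with one \emph{bowtie} (two triangles sharing a single vertex). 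In both cases $mp(G)=3$; the unique internal non-edge of $K_4-e$ completes it to $K_4$ (raising $mp$ to $4$), each internal non-edge of the bowtie yields a degree monotone path with degree sequence $2,3,3,4$, and the cross-component additions — triangle to triangle, triangle to a low-degree vertex, triangle to a high-degree vertex — are each checked directly to create a degree monotone path on four vertices. These graphs have $n+1$ edges, giving $h(n,4)\le n+1$, so with the lower bound, $h(n,4)=n+1$.

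\emph{What is delicate.} The edge-by-edge verifications in the constructions are routine; the step needing care is the lower bound for $n\not\equiv 0\pmod 3$ — in particular, checking that every component of the hypothetical $n$-edge graph is itself saturated (so that Lemma \ref{lemma_2} genuinely applies) and that components on one or two vertices are truly impossible. The small cases $n=3,4,5$ need no separate treatment, since the constructions are then simply $K_3$, $K_4-e$, and the bowtie.
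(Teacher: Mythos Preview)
Your proof is correct and follows essentially the same route as the paper: the same three constructions for the upper bound, and for the lower bound the same reduction to components via the ``no small component'' argument from Theorem~\ref{lower} together with Lemma~\ref{lemma_2} to force every component to be $K_3$. Your organization differs only cosmetically (you assume $|E(G)|=n$ and derive a contradiction when $3\nmid n$, while the paper analyzes a minimizer directly), and you are in fact more explicit than the paper in verifying that each component is itself saturated before invoking Lemma~\ref{lemma_2}.
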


\begin{proof}

We first prove the upperbound for $h(n,4)$.  Consider the following cases:
\begin{enumerate}
\item{Assume  $n = 0 \pmod 3$. Let $G$ be made up of $\frac{n}{3}$ copies of $K_3$, then clearly $mp(G) = 3$. Any edge we add  gives a degree monotone path of length 4. So $G$ is saturated and hence $h(n,4) \leq n$, for $n = 0 \pmod 3$.}
\item{Assume $n = 1\pmod 3$ .  Let $G$ be made up of $\frac{n-4}{3}$ copies of $K_3$  and a copy of $K_4 - e$, $e \in E(K_4)$.  Clearly $mp(G) = 3$ and it is easy to see that $mp(G+e)\geq 4$.  So $G$ is saturated and hence $h(n,4) \leq n+1$, for $n =1 \pmod 3$.}
\item{Assume $n = 2\pmod 3$ .  Let $G$ be made up of $\frac{n-5}{3}$ copies of $K_3$  and two copies of $K_3$ with a common vertex.  Clearly $mp(G) = 3$ and it is easy to see that $mp(G+e)\geq 4$.  So $G$ is saturated and hence $h(n,4) \leq n+1$, for $n =2 \pmod 3$.}
\end{enumerate}

Now to the lower bound.  Suppose $G$ is a graph on $n \geq 3$ vertices realising $h(n,4)$.  If $G$ is connected than by Lemma \ref{lemma_2}, either $G$ is $K_3$ or $|E(G)| \geq n+1$.  Hence we may assume that $G$ is not connected, and let $G_1,G_2, \ldots, G_t$ be the connected components of $G$.

Again, by Lemma \ref{lemma_2}, every component $G_ j$ on at least 3 vertices is either $K_3$ or contains at least $|V(G_ j)| +1$ edges.
 
If there are at least two components say $G_ i$ and $G_ j$ on at most two vertices each, then we can just add an edge between a vertex in $G_i$ and one in $G_j$ without creating a degree monotone path of length more than 3,  contradicting the fact that $G$ is saturated.
 
Lastly if there is just one component $G_ j$ on at most two vertices, then if we connect a vertex in this component to a vertex $v$ of maximum degree in another component of $G$ , then clearly no degree monotone path of length 4 or more is created , once again contradicting that $G$ is saturated.
 
Hence all components of $G$ have at least 3 vertices.  If there are at least two components which are not $K_3$ then $|E(G)|  \geq n +2$, and this is not optimal by the constructions above.  If there is just one component which is not $K_3$, then $|E(G)| \geq n+1$ and so for $n = 1 ,2 \pmod 3$, $h(n,4) \geq n+1$ proving the constructions above are optimal .
 
Finally, if all components are $K_3$, then $|E(G)|=n$, proving $h(n,4) = n$ for $n = 0 \pmod 3$.

\end{proof}

\section{Concluding Remarks and Open Problems}

Several open problems have arised during our work on this paper.  We list some of the more interesting ones:
\begin{itemize}
\item{The major role played in this paper by Lemma \ref{lemma_1} and its consequences suggest:

\emph{Problem 1}:  Find further structural conditions (along the lines indicated in Lemma \ref{lemma_1}) indicating that a graph $G$ is not saturated.}
\item{In Corollary \ref{cor_1}, we characterise saturated trees.  In a previous paper \cite{dmpclz} we characterised saturated graphs with $mp(G)=2$. This leads to the following:

\emph{Problem 2}:  Characterise $k$-saturated graphs for other families of graphs such as maximal outerplanar graphs, maximal planar graphs, regular graphs, etc.

\emph{Problem 3}:  Characterise saturated graphs with $mp(G)=3$.}
\item{The parameter $mp(G)$ can be very sensitive to edge-addition and edge-deletion, as shown in \cite{dmp2}.   Also Theorem \ref{thmcartprod} gives $h(n,7) \leq \frac{5n}{3}$ for $n = 0 \pmod 9$  while Theorem \ref{keven} gives $h(n,6)  \leq \frac{13n}{7} $ for $n = 0 \pmod 7$ .These facts  suggest the following monotonicity problem:

\emph{Problem 4}:  Is it true that, at least for $n$ large enough, depending on $k$, and for $k \geq 2$, $h(n,k+1) \geq h(n,k)$?

If true, this will have the immediate implication that the construction for $h(n,6)$ is not optimal and that in fact $h(n,6) \leq \frac{5n(1+o(1))}{3}$ by the above upper bound for $h(n,7)$.}

\item{The upper bound constructions given in Theorem \ref{thmcartprod} and Theorem \ref{keven} are probably not optimal.

\emph{Problem 5}:  Improve upon the upper bounds obtained in Theorems \ref{thmcartprod} and \ref{keven}}
\item{The lower bound given in Theorem \ref{lower} proved to be sharp in the case $k=4$.

\emph{Problem 6}:  Improve upon the lower bound $h(n,k) \geq n$ for $k \geq 5$.}
\item{In Proposition \ref{mod6} we have shown that  $h(n,5)  \leq \frac{7n}{6} + c( n \pmod 6)$.
 
\emph{Problem 7}: Determine $h(n,5)$  exactly . In particular, is it true that $h(n,5) = \frac{7n(1+o(1))}{6}$.}
\item{Lastly recall that $sat(n,k)  = n( 1 - c(k)) < n$ for every large  k and n.
 
\emph{Problem 8}:  Is it true that $h(n,k) \leq cn$ for some constant $c$ independent of $k$.}
\end{itemize}

\bibliographystyle{plain}

\bibliography{dmp3}
\end{document}